\documentclass[12pt,oneside]{amsart}
\usepackage{amssymb,verbatim,enumerate,amsmath,ifthen,cite}
\usepackage[mathscr]{eucal}
\usepackage[utf8]{inputenc}
\usepackage[T1]{fontenc}
\usepackage[marginparwidth=2.4cm]{geometry}
\newtheorem{theorem}{Theorem}[section]
\newtheorem*{theorem*}{Theorem}
\def\Thm#1#2{\ifthenelse{\equal{#1}{*}}{\begin{theorem*}#2\end{theorem*}}
{\begin{theorem}\label{T#1}#2\end{theorem}}}
\newtheorem{Atheorem}{Theorem}

\def\thm#1{Theorem~\ref{T#1}}

\newtheorem{proposition}[theorem]{Proposition}
\newtheorem*{proposition*}{Proposition}
\def\Prp#1#2{\ifthenelse{\equal{#1}{*}}{\begin{proposition*}#2\end{proposition*}}
{\begin{proposition}\label{P#1}#2\end{proposition}}}
\def\prp#1{Proposition~\ref{P#1}}

\newtheorem{corollary}[theorem]{Corollary}
\newtheorem*{corollary*}{Corollary}
\def\Cor#1#2{\ifthenelse{\equal{#1}{*}}{\begin{corollary*}#2\end{corollary*}}
{\begin{corollary}\label{C#1}#2\end{corollary}}}

\newtheorem{lemma}[theorem]{Lemma}
\newtheorem*{lemma*}{Lemma}
\def\Lem#1#2{\ifthenelse{\equal{#1}{*}}{\begin{lemma*}#2\end{lemma*}}
{\begin{lemma}\label{L#1}#2\end{lemma}}}
\def\lem#1{Lemma~\ref{L#1}}
\newtheorem{Alemma}{Lemma}

\theoremstyle{definition}
\newtheorem{remark}[theorem]{Remark}
\newtheorem*{remark*}{Remark}
\def\Rem#1#2{\ifthenelse{\equal{#1}{*}}{\begin{remark*}\rm #2\end{remark*}}
{\begin{remark}\label{R#1}\rm #2\end{remark}}}
\def\rem#1{Remark~\ref{R#1}}

\newtheorem{example}[theorem]{Example}
\newtheorem*{example*}{Example}
\def\Exa#1#2{\ifthenelse{\equal{#1}{*}}{\begin{example*}\rm #2\end{example*}}
{\begin{example}\label{Ex#1}\rm #2\end{example}}}

\numberwithin{equation}{section}
\def\eq#1{{\rm(\ref{#1})}}
\def\Eq#1#2{\ifthenelse{\equal{#1}{*}}
  {\begin{equation*}\begin{aligned}[]#2\end{aligned}\end{equation*}}
  {\begin{equation}\begin{aligned}[]\label{#1}#2\end{aligned}\end{equation}}}

\def\A{\mathscr{A}}

\def\calF{\mathcal{F}}

\def\Cts{\mathcal{C}^{2\#}}
\def\Cs{\mathcal{C}^{1\#}}
\def\CM{\mathcal{CM}}

\newcommand{\UQA}[1]{\mathscr{U}_{#1}}
\newcommand{\LQA}[1]{\mathscr{L}_{#1}}

\newcommand\R{\mathbb{R}}
\newcommand\N{\mathbb{N}}

\newcommand{\QA}[1]{\A_{#1}}

\DeclareMathOperator{\interior}{int}

\newcommand{\abs}[1]{\left| #1 \right|}

\begin{document}
\title[Lattice-like property of quasi-arithmetic means: revisited]{Lattice-like property of quasi-arithmetic means: revisited}

\author[Tibor Kiss]{Tibor Kiss}
\address{%
Institute of Mathematics, 
University of Debrecen,
Pf.\ 400, 4002 Debrecen, Hungary}
\email{kiss.tibor@science.unideb.hu}
\author{Pawe{\l} Pasteczka}
\address{Institute of Mathematics, University of Rzesz\'ow, Pigonia 1, 35-310 Rzesz\'ow, Poland}
\email{ppasteczka@ur.edu.pl}
\subjclass{Primary TBD; Secondary TBD}

\keywords{quasi-arithmetic means; differentiability; comparability; conditionally complete lattice}

\subjclass{%
26E60, 
06B35, 
06B23, 
03G10
}

\thanks{The research of the first author was supported by the Tempus Public Foundation and HUN-REN Hungarian Research Network.}
\begin{abstract}
We show that every family of quasi-arithmetic means generated by (a subset of) $\mathcal{C}^1$ functions with nonvanishing derivative which is bounded (from below or from above) by a quasi-arithmetic mean, possesses the best (lower or upper)  bound which is a quasi-arithmetic mean generated by a function belonging to the same family.
\end{abstract}

\maketitle

\section{Introduction}
Quasi-arithmetic means were introduced as a generalization of power means in the 1920s and 1930s in a group of almost simultaneous papers \cite{Kno28,Def31,Kol30,Nag30}.  For the recent results concerning axiomatization of this family we refer the reader to the papers by Burai-Kiss-Szokol \cite{BurKisSzo21,BurKisSzo23}.

Given an interval $I$ and a continuous, strictly monotone function $f \colon I \to \R$ (from now on, $I$ will always denote a nontrivial interval and $\CM(I)$ the family of continuous, strictly monotone functions on $I$), the \emph{quasi-arithmetic mean} $\QA{f} \colon \bigcup_{n=1}^\infty I^n \to I$ is defined by
\Eq{*}{
\QA{f}(v):=f^{-1}\left( \frac{f(v_1)+f(v_2)+\cdots+f(v_n)}{n} \right) \quad (n \in \N,\: v \in I^n)\:.
}
The function $f$ is referred to as the \emph{generator} of the quasi-arithmetic mean $\QA{f}$.

It is well known that, for $I=\R_+$, $\pi_p(x):=x^p$ when $p\ne 0$ and $\pi_0(x):=\ln x$, the mean $\QA{\pi_p}$ coincides with the $p$-th power mean (this was already observed by Knopp \cite{Kno28}).

Numerous results concerning quasi-arithmetic means have been obtained in various contexts; see, among others, the works of  
Baj\'ak-P\'ales \cite{BajPal09a}, 
Burai \cite{Bur13a},
Cargo-Shisha \cite{CarShi64,CarShi69},
Dar\'oczy \cite{Dar04a,Dar07}, 
Dar\'oczy-Hajdu \cite{DarHaj05}, 
Dar\'oczy-Maksa \cite{DarMak13}, 
Dar\'oczy-P\'ales \cite{DarPal01a,DarPal02b,DarPal03a,DarPal03d,DarPal08,DarPal13}, 
G{\l}azowska-W.Jarczyk-Matkowski \cite{GlaJarMat02}, 
Ito-Nara \cite{ItoNar86}, 
J.~Jarczyk \cite{Jar07a,Jar07b,Jar09b}, 
W.~Jarczyk-Matkowski \cite{JarMat00}, 
J.~Jarczyk-Matkowski \cite{JarMat06}, 
Kahlig-Matkowski \cite{KahMat97b},
Maksa-P\'ales \cite{MakPal09a,MakPal10a}, 
Marichal \cite{Mar00}, 
Matkowski \cite{Mat03d,Mat99a,Mat10b}, 
Matkowski-P\'ales \cite{MatPal15}, 
P\'ales \cite{Pal11,Pal86b,Pal87d},
P\'ales-Pasteczka \cite{PalPas26},
and Pasteczka \cite{Pas19b,Pas20b}.

For instance, one can introduce a preorder on $\CM(I)$ by
\Eq{*}{
f \vartriangleleft g \iff \QA{f}(v) \le \QA{g}(v) \text{ for all } v \in \bigcup_{n \in \N}I^n.
}
It is a classical fact (see, e.g., \cite{HarLitPol34}) that $\QA{f}=\QA{g}$ holds if and only if the generators are affinely related, i.e., there exist $\alpha \in \R \setminus\{0\}$ and $\beta \in \R$ such that $g=\alpha f + \beta$. Consequently, it is natural to introduce an equivalence relation $\sim$ on $\CM(I)$ by
\Eq{*}{
f \sim g \iff \QA{f} = \QA{g}\:.
}

Moreover, one easily verifies that this preorder induces a partial order on the quotient set $\CM(I) /_\sim$. This order exhibits numerous interesting properties (see, for example, the results of Cargo-Shisha \cite{CarShi64,CarShi69}).

As we intend to study certain lattice properties of quasi-arithmetic means, we first need to introduce the concepts of the supremum and infimum of an arbitrary subfamily of this class (for a thorough introduction to lattice theory, see e.g. \cite{DavPri02,Fel13,AndFei88}). 
In this paper we adapt several conventions from \cite{Pas20b}. 

To begin with, for a single element $f \in \CM(I)$, we consider the set of all generators of quasi-arithmetic means that dominate the one generated by $f$, namely the order interval
\[
U_f := \{ s \in \CM(I) : \QA{f}\le\QA{s} \}.
\]
Next, for any subfamily $\calF \subset \CM(I)$ we define a function $\UQA{\calF} \colon \bigcup_{n=1}^\infty I^n \to I$ by
\Eq{*}{
\UQA{\calF}(v) := 
\begin{cases}
\inf \big\{\QA{s}(v) : s \in \bigcap\limits_{f \in \calF} U_f\big\} & \text{ if }\bigcap\limits_{f \in \calF} U_f \neq \emptyset,\\[5mm]
\max(v) & \text{ otherwise.}
\end{cases}
}
Observe that $\UQA{\calF}$ is indeed a mean (it is defined on $\bigcup_{n=1}^\infty I^n$ and satisfies $\min \le \UQA{\calF} \le \max$), whereas $U_f$ is a set of functions. In an analogous way we introduce
\[
L_f := \{ s \in \CM(I) : \QA{f} \ge \QA{s}\},
\]
and, for $\calF \subset \CM(I)$, we define a mean $\LQA{\calF} \colon \bigcup_{n=1}^\infty I^n \to I$ by
\Eq{*}{
\LQA{\calF}(v) := 
\begin{cases}
\sup \big\{\QA{s}(v) : s \in \bigcap\limits_{f \in \calF} L_f\big\} & \text{ if }\bigcap\limits_{f \in \calF} L_f \neq \emptyset,\\[5mm]
\min(v) & \text{ otherwise.}
\end{cases}
}

Clearly, for each subfamily $\calF$, the mappings $\LQA{\calF}$ and $\UQA{\calF}$ are monotone and symmetric (on every $I^n$). Moreover, whenever they turn out to be quasi-arithmetic means, their generators are, respectively, the infimum and supremum of the family $\calF$ with respect to the partial order $\prec$. This naturally leads to the problem of finding sufficient conditions that guarantee that $\LQA{\calF}$ and $\UQA{\calF}$ are quasi-arithmetic means, and, once this is ensured, determining their generators.

Hereafter let $\mathcal{C}^{k\#}(I)$ stand for the family of all $\mathcal{C}^{k}$ functions on $I$ with nowhere vanishing first derivative. Clearly all functions belonging to such a family are strictly monotone and therefore they generate quasi-arithmetic means. Several results concerning the family $\Cs(I)$ are enclosed in the paper \cite{Pas19b}.

In our main result we show that every family of quasiarithmeic means generated by (a subset of) $\Cs(I)$ functions which is bounded by a quasiarithmeic mean possess the best bound ($\LQA{\calF}$ or $\UQA{\calF}$) which is a quasiarithmeic mean generated by a $\Cs(I)$ function.

\section{Preliminary results}
The following is the main result of \cite{Pas20b} which we are going to generalize.

\begin{proposition}[Pasteczka \cite{Pas20b}, Theorem~1]
 Let $\calF \subset \Cts(I)$. 
 \begin{enumerate}
 \item If the function $G \colon I \ni x \mapsto \sup_{f \in \calF} \frac{f''(x)}{f'(x)}$ is continuous, then $\UQA \calF=\QA{g}$, where $g \in \Cts(I)$ and $g''/g'=G$.
  \item If the function $H \colon I \ni x \mapsto \inf_{f \in \calF} \frac{f''(x)}{f'(x)}$ is continuous, then $\LQA \calF=\QA{h}$, where $h \in \Cts(I)$ and $h''/h'=H$.
  \end{enumerate}
 \end{proposition}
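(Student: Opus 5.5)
We plan to prove part (1); part (2) will then follow by symmetry, replacing each generator $f$ by $-f$ composed with the reflection $x\mapsto -x$ (equivalently, reversing the order), which turns infima of $f''/f'$ into suprema. The basic tool is the classical comparison criterion for smooth generators: for $f,s\in\Cts(I)$ we have $\QA{f}\le\QA{s}$ if and only if $f''/f'\le s''/s'$ on $I$. One direction follows from Jensen's inequality, since $s\circ f^{-1}$ is convex exactly when $(s''/s'-f''/f')\cdot s'/(f')^2$ has the right sign. The converse follows from a second-order expansion of $\QA{f}(x-t,x+t)$ as $t\to0$:
\[
\QA{f}(x-t,x+t)=x+\tfrac{t^2}{2}\,\tfrac{f''(x)}{f'(x)}+o(t^2).
\]

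Since $G$ is continuous, we define $g(x)=\int_{x_0}^x\exp\bigl(\int_{x_0}^u G\bigr)\,du$. Then $g\in\Cts(I)$ and $g''/g'=G$. For every $f\in\calF$ we have $f''/f'\le G=g''/g'$, so $g\in\bigcap_{f\in\calF}U_f$. This intersection is therefore nonempty, and $\UQA{\calF}\le\QA{g}$.

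For the reverse inequality we take any $s\in\bigcap_{f\in\calF}U_f$ and must show $\QA{g}\le\QA{s}$. Here $s$ need only be continuous and strictly monotone, not $\mathcal C^2$, so the derivative criterion cannot be applied to $s$ directly. This is the main obstacle. We handle it by working with the function $\varphi=s\circ g^{-1}$; it suffices to show that $\varphi$ is convex in the appropriate sense (convex if $s$ is increasing, concave if decreasing). For each $f\in\calF$ the condition $\QA{f}\le\QA{s}$ says that $s\circ f^{-1}$ is (Jensen-)convex, hence convex by continuity. Writing $\varphi=(s\circ f^{-1})\circ(f\circ g^{-1})$ directly does not preserve convexity in general.

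Instead we argue locally. Fix a point $x$ and $\varepsilon>0$. Choose $f\in\calF$ with $f''(x)/f'(x)>G(x)-\varepsilon$. By continuity of $G$ and of $f''/f'$, on a small neighbourhood $J$ of $x$ we have $f''/f'>g''/g'-2\varepsilon$. We then compare $s$ with the generator $g_\varepsilon$ defined by $g_\varepsilon''/g_\varepsilon'=G-2\varepsilon$ on $J$. Since $g_\varepsilon''/g_\varepsilon'\le f''/f'$ on $J$, we get $\QA{g_\varepsilon}\le\QA{f}\le\QA{s}$ on vectors from $J$, so $s\circ g_\varepsilon^{-1}$ is convex on $g_\varepsilon(J)$. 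Convexity is a local property, so this gives a local convexity statement near every point. Letting $\varepsilon\to0$, we have $g_\varepsilon\to g$ in $\mathcal C^1$ on $J$ after normalization at $x$, and local convexity of $s\circ g^{-1}$ follows from a pointwise limit of convex functions. Hence $s\circ g^{-1}$ is convex on all of $g(I)$, which gives $\QA{g}\le\QA{s}$.

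Taking the infimum over all such $s$ yields $\UQA{\calF}=\QA{g}$.
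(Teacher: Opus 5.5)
Your strategy is sound, but the final limiting step fails as written. The neighbourhood $J$ is chosen \emph{after} $\varepsilon$: it is the set where the particular $f=f_\varepsilon$ satisfies $f''/f'>G-2\varepsilon$. So there is no fixed $J$ on which you can let $\varepsilon\to0$, and ``local convexity of $s\circ g^{-1}$'' is never obtained on any neighbourhood. Choosing $J$ more cleverly does not help. Take $I=\R$ and $\calF=\{f_c\colon c\in\R\}$ with $f_c''/f_c'(y)=-\min(1,|y-c|)$; then $G\equiv0$. At $x=0$, any $f_c$ with $f_c''/f_c'(0)>-\varepsilon$ has $|c|<\varepsilon$. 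For such $c$, the inequality $f_c''/f_c'>-2\varepsilon$ holds only on $(c-2\varepsilon,c+2\varepsilon)\subset(-3\varepsilon,3\varepsilon)$, so the admissible neighbourhoods necessarily shrink to $\{0\}$. You do invoke locality of convexity, but at the wrong stage.

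The repair is to reverse the order of the two limiting operations.
\begin{itemize}
\item Define $g_\varepsilon$ on all of $I$ by $g_\varepsilon''/g_\varepsilon'=G-2\varepsilon$, normalized once at a fixed $x_0$.
\item Fix $\varepsilon$ and run your local argument at every $x\in I$, each with its own $f_x\in\calF$ and neighbourhood $J_x$. On $g_\varepsilon(J_x)$ we have $s\circ g_\varepsilon^{-1}=(s\circ f_x^{-1})\circ(f_x\circ g_\varepsilon^{-1})$. The outer function is convex and nondecreasing, and the inner one is convex, so this is the ``right'' composition and is convex there.
\item By locality of convexity for continuous functions, $s\circ g_\varepsilon^{-1}$ is convex on all of $g_\varepsilon(I)$. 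That is, $\QA{g_\varepsilon}\le\QA{s}$ on every $I^n$.
\item Only now let $\varepsilon\to0$. Since $g_\varepsilon'=g'e^{-2\varepsilon(\cdot-x_0)}$, we get $g_\varepsilon\to g$ locally uniformly. Hence $\QA{g_\varepsilon}\to\QA{g}$ pointwise (Lemma~\ref{LPal91}), which gives $\QA{g}\le\QA{s}$.
\end{itemize}

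With this change your proof is complete, and it takes a genuinely different, more elementary route than the paper. The paper only quotes this statement. Its generalization, the final Corollary, first invokes Theorem~\ref{TFCs}, and through it Theorem~\ref{Tmf} and Proposition~\ref{PCC}, to know that $\UQA{\calF}=\QA{v}$ with $v\in\Cs(I)$; this is how it disposes of non-smooth upper bounds $s$. It then globalizes the same local choice of $f$ by a finite subcover of $[p,q]$ and a telescoping sum of increments of $\log v'$. In your version, the composition-of-convex-functions step replaces the smoothing theorem, and locality of convexity at fixed $\varepsilon$ replaces the compactness argument. Both routes use continuity of $G$ (really its upper semicontinuity) in the same local step.
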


Let us also recall several technical lemmas.
\begin{lemma}[P\'ales \cite{Pal91}, Corollary~1]
 \label{LPal91}
Let $f$ and $f_n$, $n \in \N$, be continuous, strictly monotone functions defined on $I$. Then $\lim\limits_{n \to \infty} \QA{f_n}=\QA{f}$ pointwise if and only if
\Eq{*}{
\lim_{n \to \infty} \frac{f_n(x)-f_n(z)}{f_n(y)-f_n(z)}=\frac{f(x)-f(z)}{f(y)-f(z)} \qquad \text{for all }x,\,y,\,z \in I \text{ with } y \ne z.
}
\end{lemma}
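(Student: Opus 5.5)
The plan is to reduce to normalized generators, for which the ratio condition becomes plain pointwise convergence, and then prove each implication by monotone sandwiching. Fix $y\neq z$ in $I$ and set $g_n:=\frac{f_n-f_n(z)}{f_n(y)-f_n(z)}$ and $g:=\frac{f-f(z)}{f(y)-f(z)}$. Then $g_n\sim f_n$ and $g\sim f$, so $\QA{g_n}=\QA{f_n}$ and $\QA{g}=\QA{f}$. The ratio condition for this pair $(y,z)$ says exactly $g_n\to g$ pointwise. Moreover, the ratio for an arbitrary pair $(y',z')$ is itself a ratio of differences of $g_n$. Hence the condition holds for all triples iff $g_n\to g$ pointwise for a single fixed pair $y\ne z$. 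All $g_n$ and $g$ take the value $0$ at $z$ and $1$ at $y$. So if $z<y$ they are increasing, and the case $z>y$ is symmetric.

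\emph{Sufficiency.} Assume $g_n\to g$ pointwise. Take $v\in I^k$ that is not constant (constant vectors are trivial), and let $m:=\QA{g}(v)$, which lies in the interior of $[\min v,\max v]\subset I$. The averages $t_n:=\frac1k\sum g_n(v_i)$ converge to $t:=\frac1k\sum g(v_i)=g(m)$. For small $\varepsilon>0$ strict monotonicity gives $g(m-\varepsilon)<t<g(m+\varepsilon)$. Since $g_n(m\pm\varepsilon)\to g(m\pm\varepsilon)$, eventually $g_n(m-\varepsilon)<t_n<g_n(m+\varepsilon)$. Therefore $\QA{g_n}(v)=g_n^{-1}(t_n)\in(m-\varepsilon,m+\varepsilon)$ eventually. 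No uniform convergence is needed.

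\emph{Necessity.} Assume $\QA{g_n}\to\QA{g}$ pointwise and $z<y$; we show $g_n(x)\to g(x)$ for every $x$, in three steps.

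First, for a rational level $r=j/k\in(0,1)$ put $p_n(r):=\QA{g_n}(z,\dots,z,y,\dots,y)$, with $k-j$ copies of $z$ and $j$ copies of $y$. Then $g_n(p_n(r))=r$, and $p_n(r)\to p(r):=\QA{g}(\cdot)$, where $g(p(r))=r$.

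Second, take $x\in(z,y)$ and rationals $r<g(x)<r'$. Then $p(r)<x<p(r')$, so eventually $p_n(r)<x<p_n(r')$, hence $r<g_n(x)<r'$. Letting $r,r'\to g(x)$ gives $g_n(x)\to g(x)$ on $[z,y]$.

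Third, for $x>y$ let $c:=g(x)>1$, choose $k>c$, and set $q_n:=\QA{g_n}(x,z,\dots,z)$ with $k-1$ copies of $z$. Then $g_n(q_n)=g_n(x)/k$, and $q_n\to q:=\QA{g}(x,z,\dots,z)$ with $g(q)=c/k\in(0,1)$, so $q\in(z,y)$. Convergence on $[z,y]$ together with monotone sandwiching gives $g_n(q_n)\to g(q)$: eventually $q_n\in(q-\varepsilon,q+\varepsilon)$, and $g_n(q\pm\varepsilon)\to g(q\pm\varepsilon)$. Hence $g_n(x)\to c$. For $x<z$ argue the same way with $\QA{g_n}(x,y,\dots,y)$ ($k-1$ copies of $y$), whose $g_n$-level is $(g_n(x)+k-1)/k$; choose $k$ large so that this level lies in $(0,1)$.

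The main obstacle is this third step, extending convergence outside $[z,y]$. There is no a priori reason that the preimage of a prescribed level exists in $I$. The device of averaging $x$ with many copies of $z$ or $y$ pulls the relevant level back into $(0,1)$, where the convergence from the second step applies.
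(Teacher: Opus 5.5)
The paper does not prove this lemma; it quotes it from P\'ales. It uses only the ``if'' direction, in the proof of Theorem~\ref{Tmf}: there $f_n\to m_f$ pointwise with $m_f$ strictly monotone, so the ratio condition holds automatically. There is therefore no in-paper argument to compare with. Judged on its own, your proof is correct and complete.

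The normalization $g_n=(f_n-f_n(z))/(f_n(y)-f_n(z))$ does reduce the condition for all triples to pointwise convergence $g_n\to g$ for a single pair. Every other ratio is a ratio of differences of $g_n$, and its limiting denominator $g(y')-g(z')$ is nonzero. The sufficiency part is a clean monotone sandwich that needs no uniformity. The necessity part also works:
\begin{itemize}
\item the vectors $(z,\dots,z,y,\dots,y)$ produce points $p_n(r)\to p(r)$ at which $g_n$ takes the rational value $r$ exactly;
\item this squeezes $g_n(x)$ for $x\in(z,y)$;
\item averaging $x$ with $k-1$ copies of $z$ or $y$ moves the level into $(0,1)$, where that convergence is available.
\end{itemize}

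A few points to tighten:
\begin{itemize}
\item Since one pair suffices, just fix $z<y$ from the start; no symmetric case is needed.
\item For $x<z$, what you actually choose is $k>1-g(x)$, so that the limiting level $(g(x)+k-1)/k$ lies in $(0,1)$ and hence $q\in(z,y)$. The levels of $g_n$ are the unknowns and cannot be prescribed.
\item In the third step, take $\varepsilon$ so small that $q\pm\varepsilon\in[z,y]$, where convergence is already established. Then let $\varepsilon\to0$ using continuity of $g$.
\item Your necessity argument uses means of arbitrarily many variables. This is legitimate because the hypothesis is convergence on all of $\bigcup_{n}I^n$. If only two-variable means were assumed to converge, you would replace the levels $j/k$ by dyadic levels generated through iterated two-point means, and the factor $1/k$ by repeated halving. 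The same monotone sandwich would control each step.
\end{itemize}
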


Now let us recall some recent results concerning smoothness properties implied by comparability enclosed in \cite{Pas19b}.

\begin{lemma}[Pasteczka \cite{Pas19b}]
 \label{Lhnonvanish}
 Let $f,\,g \in \CM(I)$ such that $\QA{f} \le \QA{g}$. 
 Then 
 \begin{itemize}
  \item one-sided derivatives $f'_-$ and $g'_-$  (resp. $f'_+$ and $g'_+$) exist at the same points;
  \item $f'_-$ and $g'_-$ (resp. $f'_+$ and $g'_+$) vanish at the same points.
 \end{itemize}
\end{lemma}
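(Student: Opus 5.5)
The plan is to reduce the comparison of means to a convexity statement and then apply a one-sided chain rule. Recall the classical characterization (Jensen-type, see \cite{HarLitPol34}): if $f$ and $g$ are both strictly increasing, then $\QA{f}\le\QA{g}$ holds if and only if $\varphi:=g\circ f^{-1}$ is convex on the interval $f(I)$.

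We may always reduce to this case. Replacing $f$ by $-f$ or $g$ by $-g$ does not change the means, since it is an affine change of generator. It also does not affect whether one-sided derivatives exist, nor whether they vanish. So we assume from the start that $f$ and $g$ are increasing. Then $g=\varphi\circ f$, where $\varphi$ is convex and strictly increasing on $J:=f(I)$.

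The first key step is to show that $\varphi$ has one-sided derivatives at every interior point $t$ of $J$, and that these are finite and strictly positive. Existence and finiteness hold for any convex function at interior points. For positivity, suppose $\varphi'_-(t)=0$. The supporting-line inequality $\varphi(s)\ge\varphi(t)+\varphi'_-(t)(s-t)$ for $s<t$ would then give $\varphi(s)\ge\varphi(t)$, which contradicts strict monotonicity. Hence $\varphi'_-(t)>0$, and since $\varphi'_+(t)\ge\varphi'_-(t)$, also $\varphi'_+(t)>0$.

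The second step is a one-sided chain rule. Fix an interior point $x\in I$ and let $y\to x^-$. For such $y$ we have $f(y)\ne f(x)$, so we can write
\[
\frac{g(y)-g(x)}{y-x}=\frac{\varphi(f(y))-\varphi(f(x))}{f(y)-f(x)}\cdot\frac{f(y)-f(x)}{y-x}.
\]
Since $f$ is continuous and increasing, $f(y)\to f(x)^-$. Therefore the first factor tends to $\varphi'_-(f(x))$, which lies in $(0,\infty)$ by the first step. It follows that the left difference quotient of $g$ converges if and only if that of $f$ does, and in that case
\[
g'_-(x)=\varphi'_-(f(x))\,f'_-(x).
\]
The same argument applies to infinite limits if one allows them. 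Because the multiplier is a finite positive constant, $g'_-(x)=0$ exactly when $f'_-(x)=0$. Repeating the argument with $y\to x^+$ gives the corresponding statement for right derivatives.

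The main obstacle is the behaviour at endpoints of $I$ that belong to $I$. There $f(x)$ is an endpoint of $J$, and the one-sided derivative of $\varphi$ may equal $0$ or $+\infty$. For example, $f(x)=x$ and $g(x)=x^2$ on $[0,1]$ satisfy $\QA{f}\le\QA{g}$, yet $g'_+(0)=0\ne f'_+(0)$. So the lemma must be read at interior points of $I$, or the endpoint case must be treated separately. I would therefore state the argument for $x\in\interior I$, where the positivity obtained in the first step is exactly what makes the proof work.
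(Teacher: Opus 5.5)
Your proof is correct at interior points of $I$. The paper gives no proof of this lemma; it is quoted from \cite{Pas19b}, so there is nothing to compare against directly. Your route is the natural one: convexity of $\varphi=g\circ f^{-1}$, finiteness and strict positivity of $\varphi'_\pm$ at interior points of $f(I)$, then the one-sided chain rule $g'_\pm(x)=\varphi'_\pm(f(x))\,f'_\pm(x)$. The paper itself performs the same one-sided differentiation on $f\circ g^{-1}$ in the proof of \thm{mf}.

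Your endpoint caveat is also correct, and it matters for the paper. Your example $f(x)=x$, $g(x)=x^2$ on $[0,1]$ violates the second clause at $0$. Moreover, the paper applies the lemma at every point of $I$, e.g.\ to conclude $A=B=\emptyset$ in \thm{mf}. That conclusion already fails on $I=[0,1]$ with $f(x)=x^2$, since $g(x)=x\in\Cs(I)$ satisfies $\QA{g}\le\QA{f}$. So the lemma, and its later uses, should be read with $I$ open, as \lem{concave} assumes.
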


\begin{lemma}[Pasteczka \cite{Pas19b}]
\label{Lsandwich}
 Let $f,\,g,\,h \in \CM(I)$ such that $\QA{f} \le \QA{g}\le \QA{h}$. 
\begin{enumerate}[(a)]
\item If $f$ and $h$ are differentiable at a certain point $x_0 \in I$ then so is $g$.
\item If $f, h \in \Cs(I)$ then so is $g$.
\end{enumerate}
\end{lemma}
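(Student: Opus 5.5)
The plan is to reduce both parts to convexity of the ``transition'' functions $g\circ f^{-1}$ and $h\circ g^{-1}$. After this reduction, part (a) becomes a comparison of one-sided derivatives of convex functions, and part (b) follows from (a) plus the fact that a convex function that is differentiable on an open interval is automatically $\mathcal{C}^1$ there.

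First I normalize. Replacing a generator by its negative does not change the mean, so I may assume $f,g,h$ are all increasing. I then use the classical characterization: for increasing generators, $\QA{f}\le\QA{g}$ holds if and only if $\varphi:=g\circ f^{-1}$ is convex on $f(I)$. Likewise $\psi:=h\circ g^{-1}$ is convex on $g(I)$. Both $\varphi$ and $\psi$ are increasing, so their one-sided derivatives exist, are finite and nonnegative at interior points, and satisfy $\varphi'_-\le\varphi'_+$ and $\psi'_-\le\psi'_+$. We have $g=\varphi\circ f$ and $h=\psi\circ g$.

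For (a), let $x_0$ be an interior point and $t_0=f(x_0)$; at an endpoint only one side is involved and the argument simplifies. Since $f$ is strictly increasing, $f(x)\to t_0^\pm$ as $x\to x_0^\pm$. The one-sided chain rule then gives $g'_\pm(x_0)=\varphi'_\pm(t_0)\,f'(x_0)$ and $h'_\pm(x_0)=\psi'_\pm(g(x_0))\,g'_\pm(x_0)$.

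I split into two cases.
\begin{enumerate}[(i)]
\item If $f'(x_0)=0$, then $g'_\pm(x_0)=0$ because $\varphi'_\pm(t_0)$ is finite, so $g$ is differentiable at $x_0$.
\item If $f'(x_0)\neq0$, then \lem{hnonvanish} applied to the pair $\QA{f}\le\QA{h}$ gives $h'(x_0)\neq0$, hence $h'(x_0)>0$. From $\psi'_-(g(x_0))\,g'_-(x_0)=h'(x_0)>0$ both factors are positive. Now $0<g'_-\le g'_+$ (because $\varphi'_-\le\varphi'_+$) and $0<\psi'_-\le\psi'_+$. The product identity $\psi'_+g'_+=h'(x_0)=\psi'_-g'_-$ is therefore an equality between products of positive numbers with each factor on the left dominating the corresponding factor on the right. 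This forces $g'_-(x_0)=g'_+(x_0)$.
\end{enumerate}

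For (b), part (a) shows that $g$ is differentiable on $I$, and \lem{hnonvanish} shows that $g'$ vanishes nowhere, since $f'$ does not. Continuity of $g'$ is the only remaining point, and I expect it to be the main (mild) obstacle. Since $f$ and $g$ are differentiable with $f'\neq0$, $\varphi$ is differentiable at every point of $f(I)$, with $\varphi'=(g'/f')\circ f^{-1}$. A convex function differentiable on an open interval has continuous derivative, because the one-sided derivatives are monotone and coincide. Hence $\varphi'$ is continuous on the interior, and $g'=(\varphi'\circ f)\,f'$ is continuous there. If $I$ contains an endpoint, I will handle it by one-sided monotonicity: $\varphi'$ is monotone, so its one-sided limit at the endpoint exists and equals the one-sided derivative there, which is finite because $g'$ exists at that endpoint.
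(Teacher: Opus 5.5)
The paper gives no proof of this lemma; it is quoted from Pasteczka's earlier paper. I can therefore only assess your argument on its own terms.

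\textbf{What is correct.} At interior points your argument works. The one-sided chain rule with the finite one-sided derivatives of the convex functions $\varphi=g\circ f^{-1}$ and $\psi=h\circ g^{-1}$ is valid. So is the case split on $f'(x_0)$, and so is the comparison of $\psi'_+g'_+=\psi'_-g'_-$ using $0<\psi'_-\le\psi'_+$ and $0<g'_-\le g'_+$. Your treatment of (b) is also fine, including the endpoint limit of the monotone function $\varphi'$, once (a) is available at the endpoints.

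\textbf{The gap.} It is the sentence ``at an endpoint only one side is involved and the argument simplifies.''

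\emph{Left endpoint.} Here it does simplify. The chord slopes $\frac{\varphi(t)-\varphi(f(x_0))}{t-f(x_0)}$ of the increasing convex function $\varphi$ are nonnegative and decrease as $t\downarrow f(x_0)$. Hence $\varphi'_+(f(x_0))$ is finite.

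\emph{Right endpoint} $x_0=\max I$. Here $\varphi'_-(f(x_0))$ can be $+\infty$. Take $I=[0,1]$, $f(x)=x$ and $g(x)=1-\sqrt{1-x}$. Then $\varphi=g$ is convex, so $\QA{f}\le\QA{g}$, yet $g'_-(1)=+\infty$. So the formula $g'_-(x_0)=\varphi'_-(f(x_0))\,f'(x_0)$ does not produce a finite value; when $f'(x_0)=0$ it is the indeterminate form $\infty\cdot 0$. Your case (i) rests on finiteness of $\varphi'_\pm$, so it does not carry over. The lower generator alone cannot control the right endpoint, and this is exactly where $h$ must enter.

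\textbf{The fix.} Set $\chi:=g\circ h^{-1}$, which is concave and increasing on $h(I)$. Its chord slopes $\frac{\chi(h(x_0))-\chi(s)}{h(x_0)-s}$ are nonnegative and decrease as $s\uparrow h(x_0)$. Hence $\chi'_-(h(x_0))\in[0,\infty)$. Since $h(x)\uparrow h(x_0)$ as $x\uparrow x_0$, you get $g'_-(x_0)=\chi'_-(h(x_0))\,h'_-(x_0)$, which is finite. So: use $f$ at a left endpoint and $h$ at a right endpoint.

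\textbf{A simplification of the interior case.} The same $\chi$ streamlines it. At an interior point, concavity of $\chi$ and $h'(x_0)\ge 0$ give $g'_-(x_0)=\chi'_-(h(x_0))\,h'(x_0)\ge\chi'_+(h(x_0))\,h'(x_0)=g'_+(x_0)$. Together with $g'_-(x_0)\le g'_+(x_0)$, which you derived from convexity of $\varphi$, this yields differentiability directly. It needs no case distinction and no appeal to \lem{hnonvanish}.
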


\Rem{C1QA}{
Let $f,g \in \Cs(I)$ be strictly increasing. Then $\QA{f}\le\QA{g}$ if and only if $\frac{g'}{f'}$ is increasing, or equivalently, $\log g'-\log f'$ is  increasing.
}

Finally we show a result concerning convexity/concavity of a real-valued function.

\Lem{concave}{
Let $I \subset \R$ be an open interval and $f \colon I \to \R$ be an arbitrary function. Assume that there exists a finite set $Z$ such that $f$ is differentiable at every point of $Z$ and $f$ is convex (resp. concave) on every connected component of $I \setminus Z$. Then $f$ is convex (resp. concave).
}
\begin{proof}
Since convexity is a localizable property, it is sufficient to show that $f$ is convex in a neighborhood of every point of $x_0\in I$. If $x_0 \in I \setminus Z$ then $f$ is convex on a connected component of $I \setminus Z$ containing $x_0$, and we are done. Otherwise there exists $\varepsilon>0$ such that $(x_0-\varepsilon,x_0+\varepsilon)\cap Z=\{x_0\}$. Then $f$ is convex on $(x_0-\varepsilon,x_0)$ and $(x_0,x_0+\varepsilon)$ and differentiable at $x_0$. In particular, one-sided derivatives of $f$ are nondecreasing on $(x_0-\varepsilon,x_0+\varepsilon)$. This proves that $f$, being a primitive of a nondecreasing function, is convex on this interval.
\end{proof}

\section{Main results}

Let $f,g \colon I \to \R$ be continuous functions. Define the order $f \prec g$ if and only if $f-g$ is nonincreasing. We begin with establishing its most important properties. Recall that a lattice is said to be \emph{conditionally complete} if any nonempty subset bounded above has the least upper bound and any nonempty subset bounded below has the greatest lower bound.

\Prp{CC}{
The ordered set $(\mathcal{C}(I),\prec)$ is a conditionally complete lattice.
}
\begin{proof}
Let $\calF=\{f_\gamma \colon I \to \R \mid \gamma \in \Gamma\}$ be a nonempty family of continuous functions. Assume that there exists a continuous function $g \colon I \to \R$ such that $f_\gamma \prec g$ for all $\gamma \in \Gamma$. Define $\delta \colon \{(x,y)\in I^2\colon x \le y\} \to \R$ by  
\Eq{*}{
\delta(x,y):=\inf_{\gamma \in \Gamma} f_\gamma(x)-f_\gamma(y), \qquad x,y\in I,\ x\le y.
}
Observe that $k \colon I \to \R$ is an upper bound of $\calF$ if and only if
\Eq{ub}{
k(x)-k(y) \le \delta(x,y)\quad \text{ for all }x,y\in I,\ x\le y.
}

Next we define $\Delta \colon \{(x,y)\in I^2\colon x \le y\} \to \R$ by  
\Eq{*}{
\Delta(x,y):=\inf \Big\{\sum_{i=1}^n \delta(t_{i-1},t_i) \colon n \in \N \text{ and } x=t_0<t_1<\dots<t_n=y\Big\}.
}
First, we show that $\Delta$ is indeed a real-valued function. Using that $g$ is an upper bound of $\calF$, for arbitrary partition $(t_i)_{i=0}^n$ of $[x,y]$, we can write $g(t_{i-1})-g(t_i)\leq \delta(t_{i-1},t_i)$ for all $i=1,\dots,n$. Summing up these inequalities side-by-side, we obtain that
\Eq{*}{g(x)-g(y)\leq \sum\limits_{i=1}^n\delta(t_{i-1},t_i).}
Since the above inequality is true for any partition of $[x,y]$, it follows that $\Delta(x,y)\in\R$ and 
\Eq{gg}{
g(x)-g(y)\le \Delta(x,y)\text{ for all }x,y\in I\text{ with }x\leq y.
}
Let us note that adding an extra point cannot increase the sum of $\delta$-s, namely we have
\Eq{*}{
\Delta(x,z)=\inf \Big\{ \sum_{j=-m+1}^n \delta(t_{j-1},t_j) \colon m,n \in \N\text{ and }x=t_{-m}<\dots<t_0=y<t_1<\dots<t_n=z\Big\}.
}

Consequently, $\Delta$ is well-defined. We also note that this computation yields that whenever $k:I\to\R$ is an upper bound of $\calF$, it follows that $k(x)-k(y)\leq\Delta(x,y)$ holds true for $x,y\in I$ with $x\leq y$.

Observe that for all $x,y,z\in I$ with $x\leq y\leq z$ we have $\delta(x,y)+\delta(y,z)\le \delta(x,z)$. In the next part of the proof, we show that, in the case of $\Delta$, this condition already upgrades to equality. More precisely, we show that 
\Eq{Deltaxyz}{
\Delta (x,y)+\Delta(y,z)= \Delta(x,z),\qquad \text{ for all }x,y,z \in I\text{ with }x\leq y\leq z.
}
Let $P_1=(t_i)_{i=-m}^0$ and $P_2=(t_i)_{i=0}^n$ be partitions of $[x,y]$ and $[y,z]$, respectively, with $t_{-m}=x$, $t_0=y$ and $t_n=z$. Then
\Eq{*}{
\sum_{j=-m+1}^n \delta(t_{j-1},t_j)=\sum_{j=-m+1}^0 \delta(t_{j-1},t_j)+\sum_{j=1}^n \delta(t_{j-1},t_j)\ge \Delta (x,y)+\Delta(y,z).
}
Taking the infimum on the left-hand side, we obtain $\Delta(x,z) \ge \Delta (x,y)+\Delta(y,z)$.

To establish the inequality in the reverse direction, fix $\varepsilon>0$ arbitrarily. There exists a partition $(t_i)_{i=-m}^n$ of $[x,z]$ with $t_{-m}=x$, $t_0= y$ and $t_n=z$,  such that 
\Eq{*}{
\sum_{j=-m+1}^0 \delta(t_{j-1},t_j) \le \Delta (x,y)+\varepsilon \qquad 
\sum_{j=1}^n \delta(t_{j-1},t_j)\le \Delta(y,z)+\varepsilon.
}
Summing up the above inequalities side-by-side we get
\Eq{*}{
\sum_{j=-m+1}^n \delta(t_{j-1},t_j) \le \Delta (x,y)+\Delta(y,z)+2\varepsilon.
}
On the other hand 
\Eq{*}{
\sum_{j=-m+1}^n \delta(t_{j-1},t_j) \ge \Delta(x,z).
}
Therefore $\Delta (x,y)+\Delta(y,z)+2\varepsilon \ge \Delta(x,z)$. Taking the limit $\varepsilon \to 0^+$ we get $\Delta (x,y)+\Delta(y,z)\ge \Delta(x,z)$. Thus \eq{Deltaxyz} holds.

We are now in a position to define the function $h$. Let $x_0 \in I$ be an arbitrarily fixed point in the interior of $I$. We define $h \colon I \to \R$ by
\Eq{*}{
h(x):=
\begin{cases}
\Delta(x,x_0)&\text{ if }x \le x_0, \\
-\Delta(x_0,x)&\text{ if }x_0<x.
\end{cases}
}
We claim that $h$ is continuous with $h=\sup_{\prec}\{f_\gamma \colon I \to \R \mid \gamma \in \Gamma\}$. First, let us focus on the supremum property. We show that $h$ is an upper bound of $\calF$, and then that it is no greater than any other upper bound.

We show that $h$ fulfils \eq{ub}. Using \eq{Deltaxyz} we verify this inequality in three cases. 
If $x\le y \le x_0$ then we have
\Eq{*}{
h(x)=\Delta(x,x_0)= \Delta(x,y)+\Delta(y,x_0) \le \delta(x,y)+\Delta(y,x_0)=\delta(x,y)+h(y).
}
In the second case, if $x\le x_0< y$, we have that
\Eq{*}{
h(x)=\Delta(x,x_0)=\Delta(x,y)-\Delta(x_0,y)\le \delta(x,y)-\Delta(x_0,y)=\delta(x,y)+h(y).
}
Finally, if $x_0<x\le y$ then
\Eq{*}{
h(x)=-\Delta(x_0,x)=\Delta(x,y)-\Delta(x_0,y)\le \delta(x,y)-\Delta(x_0,y)=\delta(x,y)+h(y).
}
Therefore $h$ fulfils \eq{ub}.

Based on a remark made earlier in the proof, if $k \colon I \to \R$ is an upper bound, then $k(x)-k(y)\le \Delta(x,y)$. Now it is enough to observe that
\Eq{*}{
\Delta(x,y)=
\begin{cases}
\Delta(x,x_0)-\Delta(y,x_0)&\text{if }x\leq y\le x_0,\\[1mm]
\Delta(x,x_0)+\Delta(x_0,y)&\text{if }x\le x_0 <y\\[1mm]
\Delta(x_0,y)-\Delta(x_0,x)&\text{if }x_0<x\leq y,
\end{cases}}
where on the right hand side $h(x)-h(y)$ appears in each instance. In this way, we proved that
\Eq{*}{
k(x)-k(y)\le h(x)-h(y) \quad \text{ whenever }x<y,
}
consequently $h \prec k$, and therefore $h$ is the $\prec$-supremum of $\calF$.

In the last step, we prove that $h$ is also continuous. By \eq{gg}, for all $u,v \in I$ with $u<v$, and $\gamma_0 \in \Gamma$ we have 
\Eq{*}{
g(u)-g(v)\le \Delta(u,v)\le \delta(u,v)\le f_{\gamma_0}(u)-f_{\gamma_0}(v).
}
Applying the Squeeze Theorem, we get that
\Eq{*}{
\lim_{u \to v^-} \Delta(u,v)=\lim_{v \to u^+} \Delta(u,v)=0.
}
Let $p\in I$ be any. Then, for $p\le x_0$ and every sequence $x_n \to p^-$, we have
\Eq{*}{
\lim_{n \to \infty}\abs{h(x_n)-h(p)} =\lim_{n \to \infty}\abs{\Delta(x_n,x_0)-\Delta(x_0,p)}=\lim_{n \to \infty}\abs{\Delta(x_n,p)}=0.
}
The remaining cases, that is, the cases ($p<x_0$, $x_n \to p^+$) or ($p> x_0$, $x_n \to p^-$) or ($p\ge x_0$, $x_n \to p^+$) are completely analogous.
\end{proof}

\Thm{mf}{
Let $f \colon I \to \R$ be a continuous, strictly monotone function. Then there exists $m_f \in \Cs(I)$ such that:
\begin{enumerate}[\rm (a)]
\item $\{g \in \Cs(I) \colon \QA{f} \le \QA{g}\}$ is either empty or equals to $\{g \in \Cs(I) \colon \QA{m_f} \le \QA{g}\}$;
\item $\{g \in \Cs(I) \colon \QA{f} \ge \QA{g}\}$ is either empty or equals to $\{g \in \Cs(I) \colon \QA{m_f} \ge \QA{g}\}$.
\end{enumerate}
}
\begin{proof}
If $f \in \Cs(I)$, then this statement is trivially satisfied with $m_f=f$.

Assume that $f \notin \Cs(I)$. Denote 
\Eq{*}{
A:= \{g \in \Cs(I) \colon \QA{f} \le \QA{g}\},\qquad B:=\{g \in \Cs(I) \colon \QA{f} \ge \QA{g}\}.
}
If $f$ is not one-sided differentiable at every point, then, due to \lem{hnonvanish}, we have $A=B=\emptyset$. Therefore, we can assume that $f'_-$ and $f'_+$ exist. If there exists $x_0 \in I$ such that $f'_-(x_0)=0$ or $f'_+(x_0)=0$ then, again due to \lem{hnonvanish}, we have $A=B=\emptyset$.

For the remaining part of the proof, we assume that for all $x \in I$ we have $f'_-(x) >0$ and $f'_+(x) >0$. 
If $A \ne \emptyset$ and $B \ne \emptyset$ then there exists $g_1,g_2 \in \Cs(I)$ such that $\QA{g_1}\le\QA{f} \le \QA{g_2}$. Then, due to \lem{sandwich}, we have $f \in \Cs(I)$.

Hence we can assume that precisely one of sets $A$, $B$ is nonempty, say $A \ne \emptyset$ and $B=\emptyset$. We show that there exists $m_f  \in \Cs(I)$ such that $A=\{g \in \Cs(I) \colon \QA{m_f} \le \QA{g}\}$.

Take an arbitrary function $g \in A$. Then we have $\A_f \le \A_{g}$. We can assume that both $f$ and $g$ are strictly increasing.   

Since $\A_f \le \A_{g}$ we get that $f \circ g^{-1}$ is concave. Therefore, 
\Eq{*}{
(f \circ g^{-1})'_-(y) \ge (f \circ g^{-1})'_+(y) \text{ for all }y \in g(I).
}
Thus, since $g^{-1}$ is differentiable, for all $y \in g(I)$ we get 
\Eq{*}{
f'_-(g^{-1}(y))(g^{-1})'(y)=(f \circ g^{-1})'_-(y) \ge (f \circ g^{-1})'_+(y)=f'_+(g^{-1}(y))(g^{-1})'(y).
}
Therefore $f'_-(x) \ge f'_+(x)$ for all $x \in I$. 

Fix $x_0 \in I$ arbitrarily such that $f$ is differentiable at $x_0$. There exist countable sets $Z_+=(z_n)_{n=1}^\infty$ of elements in $(x_0,\infty) \cap I$ and $Z_-=(z_n^*)_{n=1}^\infty$ of elements in $(-\infty,x_0) \cap I$ such that $f$ is differentiable on $I \setminus (Z_+ \cup Z_-)$.

Set $f_1:=f$ and define, for all $n \ge 1$,
\Eq{*}{
f_{n+1}(x):=\begin{cases}
            \dfrac{(f_n)'_+(z_n^*)}{(f_n)'_-(z_n^*)}(f_{n}(x)-f_{n}(z_n^*))+f_{n}(z_n^*) & \text{ for }x <z_n^*,\\[4mm]
            f_n(x) & \text{ for }x \in [z_n^*, z_n],\\
                        \dfrac{(f_n)'_-(z_n)}{(f_n)'_+(z_n)}(f_{n}(x)-f_{n}(z_n))+f_{n}(z_n) & \text{ for }x >z_n.
           \end{cases}
}
Then $f_{n_0}$ is differentiable at $I \setminus ((z_n)_{n \ge n_0} \cup (z_n^*)_{n \ge n_0} )$, for all $n_0 \in \N$.

Suppose that $\A_{f_n} \le \A_{g}$. Then $f_n \circ g^{-1}$ is concave. Thus $f_{n+1}\circ g^{-1}$ is concave (separately) on the intervals $g(I) \cap (-\infty,g(z_{n}^*))$, $(g(z_{n}^*),g(z_{n}))$, and   $g(I) \cap (g(z_{n}),+\infty)$. Furthermore
\Eq{*}{
(f_{n+1}\circ g^{-1})'_+(g(z_n))&=(f_{n+1})'_+(z_n) \cdot  (g^{-1})'(g(z_n))=(f_{n})'_+(z_n) \cdot  (g^{-1})'(g(z_n)),\\
(f_{n+1}\circ g^{-1})'_-(g(z_n))&=(f_{n+1})'_-(z_n) \cdot  (g^{-1})'(g(z_n))=\dfrac{(f_n)'_+(z_n)}{(f_n)'_-(z_n)}(f_{n})'_-(z_n) \cdot  (g^{-1})'(g(z_n)).
}
Similarly $(f_{n+1}\circ g^{-1})'_-(g(z_n^*))=(f_{n+1}\circ g^{-1})'_+(g(z_n^*))$. Thus $f_{n+1}\circ g^{-1}$ is differentiable at $g(z_n)$ and $g(z_n^*)$.
Hence, in view of \lem{concave}, the function $f_{n+1}\circ g^{-1}$ is concave. Therefore $\QA{f_{n+1}} \le \QA{g}$.

Since $\QA{f_1}=\QA{f} \le \QA{g}$ we obtain  
\Eq{297}
{
\QA{f_n} \le \QA{g}\text{ for all }n \in \N.
}

Furthermore, by $f'_-\ge f'_+$, the mapping $n \mapsto f_n(x)$ is pointwise increasing for all $x \in I \cap (-\infty,x_0)$ and pointwise decreasing for all $x \in I \cap (x_0,\infty)$ while the value of $f_n(x_0)$ does not depends on $n$. Therefore $(f_n)_{n=1}^\infty$ has a real-valued pointwise limit, which we denote by $m_f \colon I \to \R$. Clearly $m_f$ is nondecreasing.

We will show that $m_f$ is continuous and invertible. 

Since, for every $n \in \N$, $f_n \circ g^{-1}$ is a concave function on $g(I)$, its pointwise limit $m_f \circ g^{-1}$ is also a concave function on $g(I)$. In particular $m_f \circ g^{-1}$ is continuous function defined on $g(I)$. Therefore, $m_f$ being a composition of two continuous functions is also continuous. 

Similarly, for every $n \in \N$, the function $g \circ f_n^{-1}$ is convex on its domain. Moreover the sequence $(g \circ f_n^{-1})_{n \in \N}$ is pointwise monotone (as a function of $n$). In particular, it has a convex (and therefore continuous) limit $u$ defined on a topological limit of $(f_n^{-1}(I))$, i.e., on $m_f^{-1}(I)$. Then $m_f^{-1}=g^{-1} \circ u$, consequently $m_f$ is invertible. Therefore, the function $m_f$ (being invertible and continuous) is strictly monotone. 

It follows that $m_f$ generates a quasi-arithmetic mean. Moreover, in view of \lem{Pal91}, we have $\lim_{n \to \infty} \QA{f_n}=\QA{m_f}$. In view of \eq{297} we get $\QA{m_f}\le \QA{g}$. Furthermore, by $f'_-(x) \ge f'_+(x)$, the function 
\Eq{*}{
f_{n+1}\circ f_n^{-1}(y):=\begin{cases}
            \dfrac{(f_n)'_+(z_n^*)}{(f_n)'_-(z_n^*)}(y-f_{n}(z_n^*))+f_{n}(z_n^*) & \text{ for }y <f_n(z_n^*),\\[4mm]
            y & \text{ for }y \in [f_n(z_n^*), f_n(z_n)],\\
                        \dfrac{(f_n)'_-(z_n)}{(f_n)'_+(z_n)}(y-f_{n}(z_n))+f_{n}(z_n) & \text{ for }y >f_n(z_n).
           \end{cases}
}

is convex for all $n \in \N$. It implies, that $\QA{f_{n+1}}\ge \QA{f_n}$, and hence $\QA{m_f} \ge \QA{f_n}$ for all $n \in \N$. In particular, $\QA{m_f} \ge \QA{f}$. 

Since $g$ was an arbitrary element in $A$ we get $A \subset \{g \in \Cs(I) \colon \QA{m_f} \le \QA{g}\}$.
Furthermore, by  $\QA{m_f} \ge \QA{f}$ the opposite inclusion is also valid.

Finally, we utilize \lem{sandwich} to show that $m_f \in \Cs(I) $. Observe that $\QA{f}\le \QA{m_f}\le \QA{g}$, $g \in \Cs(I)$, and $f$ is differentiable on $I \setminus (Z_+ \cup Z_-)$. Therefore, by part (a), we get that $m_f$  is differentiable on $I \setminus (Z_+ \cup Z_-)$. 
Moreover for all $n \in \N$ we have that $\QA{f_{n+1}}\le \QA{m_f}\le \QA{g}$ and hence $f_{n+1}$ is differentiable at $z_{n}$ and $z^*_{n}$. Therefore, using the same statement, $m_f$ is differentiable at $z_{n}$ and $z^*_{n}$ as well. Thus $m_f$ is  differentiable on $I$. Furthermore, by \lem{hnonvanish} we obtain that $m_f'$ is nowhere vanishing. 

Finally we show that $m_f'$ is continuous. Since $m_f \circ g^{-1}$ is concave and differentiable we get $m_f\circ g^{-1} \in \mathcal{C}^1(g(I))$. Thus $m_f'=\frac{(m_f\circ g^{-1})'}{(g^{-1})'} \circ g$ is continuous, that is $m_f \in \Cs(I)$.

The proof in the case $A = \emptyset$ and $B\ne\emptyset$ is similar.
\end{proof}

The following is the main result of this paper.

\Thm{main}{
Let $\calF \subset \Cs(I)$.
\begin{enumerate}[\rm (i)]
\item If there exists $g \in \CM(I)$ such that $\QA{f}\le \QA{g}$ for all $f \in \calF$ then there exists $u \in \Cs(I)$ such that
\begin{enumerate}
\item $\QA{f}\le \QA{u}$ for all $f \in \calF$
\item $\QA{u} \le \QA{k}$ for every $k \in \CM(I)$ such that $\QA{f}\leq \QA{k}$ for all $f \in \calF$.
\end{enumerate}
\item If there exists $g \in \CM(I)$ such that $\QA{f}\ge \QA{g}$ for all $f \in \calF$ then there exists $\ell \in \Cs(I)$ such that
\begin{enumerate}
\item $\QA{f}\ge \QA{\ell}$ for all $f \in \calF$
\item $\QA{\ell} \ge \QA{k}$ for every $k \in \CM(I)$ such that $\QA{f}\ge \QA{k}$ for all $f \in \calF$.
\end{enumerate}
\end{enumerate}
}
\begin{proof}
Let us assume without loss of generality that all elements of $\calF$ as well as $u$ and $k$ are strictly increasing.

We will show only part (i) since (ii) is analogous. Let $\calF \subset \Cs(I)$ and $g \in \CM(I)$ be such that 
 $\QA{f}\le \QA{g}$ for all $f \in \calF$. In view of \thm{mf} there exists a function $m_g \in \Cs(I)$ such that
$\{f \in \Cs(I) \colon \QA{f} \le \QA{g}\}=\{f \in \Cs(I) \colon \QA{f} \le \QA{m_g}\}$. 
Then, by \rem{C1QA}, $\frac{f'}{m_g'}$ is nonincreasing for all $f \in \calF$. Since $m_f'$ and $f'$ are nonvanishing, we get that $\log f'-\log m_g'$ is nonincreasing  for all $f \in \calF$, i.e. $\log f' \prec \log m_g'$. 
 
Then, since $(\mathcal{C}(I),\prec)$ is conditionally complete (vide \prp{CC}), there exists a continuous $s \colon I \to \R$ such that $\log f' \prec s$ for all $f \in \calF$ and $s \prec h$ for every $\prec$-upper bound $h$ of $\calF$. 

Fix $x_0 \in I$ arbitrarily and define $u \colon I \to \R$ by 
\Eq{*}{
u(x)=\int_{x_0}^x e^{s(t)}\:dt.
}
We will show that $u$ satisfies both properties (a) and (b). Note that
$\log u'-\log f'=s-\log f'$ is increasing. Due to the \rem{C1QA} we get $\QA{u} \ge \QA{f}$ which is the proof of the property (a) of (i). 

Now take an arbitrary $k \in \CM(I)$ such that $\QA{f}\le \QA{k}$ for all $f \in \calF$. Due to \thm{mf} there exists $m_k \in \Cs(I)$ such that $\QA{f}\le \QA{m_k}$ for all $f \in \calF$ and $\QA{m_k} \le \QA{k}$.

Therefore $ \log f' \prec \log m_k'$ for all $f \in \calF$.  Thus $\log u'=s\prec \log m_k'$ which, using \rem{C1QA} again, implies that $\QA{u}\le \QA{m_k} \le \QA{k}$. This completes the proof of the property (b) of (i).
\end{proof}

Now we express the same in terms of means $\LQA{}$ and $\UQA{}$.
\Thm{FCs}{
Let $\calF \subset \Cs(I)$. Then
\begin{enumerate}[\rm (i)]
\item $\UQA{\calF}$ equals to $\QA{u}$ for some $u \in\Cs(I)$ or equals to $\max$;
\item $\LQA{\calF}$ equals to $\QA{\ell}$ for some $\ell \in\Cs(I)$ or equals to $\min$.
\end{enumerate}
}
\begin{proof}
Take $\calF \subset \Cs(I)$ such that $\UQA{\calF}$ is not a maximum. Then, by the definition, 
\Eq{*}{
S:=\bigcap\limits_{f \in \calF} U_f=\bigcap\limits_{f \in \calF} \{ s \in \CM(I) \colon \QA{f} \le \QA{s}\} \ne \emptyset,
}
and we have
$\UQA{\calF}(v)=
\inf \{\QA{s}(v) \colon s \in S\}$. However, by \thm{main}, there exists $u \in \Cs(I)$ such that $u \in S$ and $\QA{u} \le \QA{s}$ for all $s \in S$. Then we trivially obtain $\UQA{\calF}=\QA{u}$.

The proof of the second part is completely analogous. 
\end{proof}

Our last result generalizes Theorem~1 of \cite{Pas20b}.

\Cor{*}{
 Let $\calF \subset \Cts(I)$ and $x_0 \in \interior I$.\\
 (A) If $G \colon I \ni x \mapsto \sup_{f \in \calF} \frac{f''(x)}{f'(x)}$ is well-defined (real-valued) and integrable then $\UQA{\calF}=\QA{u}$, where
 \Eq{*}{
    u(x)=\int_{x_0}^x\exp\bigg(\int_{x_0}^t G(s)\:ds\bigg)dt.
 }
 (B) If $H \colon I \ni x \mapsto \inf_{f \in \calF} \frac{f''(x)}{f'(x)}$ is well-defined (real-valued) and integrable then $\LQA{\calF}=\QA{\ell}$, where
 \Eq{*}{
    \ell(x)=\int_{x_0}^x\exp\bigg(\int_{x_0}^t H(s)\:ds\bigg)dt.
 }
 }
\begin{proof}
We focus on part (A) since the second is analogous. Take $f_0 \in \calF$ arbitrarily. Since $\QA{-f_0}=\QA{f_0}$ we can assume without loss of generality that $f_0$ is increasing. 
Then $G-\frac{f''_0}{f'_0}\ge 0$. Therefore,
due to the integrability of $G$, the function $u\colon t \mapsto \int_{x_0}^t G(s)-\frac{f''_0}{f'_0}(s)ds$ is well-defined, increasing and continuous.  Hence, so is $\exp u$. Therefore, 
\Eq{*}{
\exp\bigg(\int_{x_0}^t G(s)\:ds\bigg)&=\exp\bigg(u(t)+\int_{x_0}^t \frac{f''_0}{f'_0}(s)\:ds\bigg)=\exp\bigg(u(t)+\ln f_0'(t)-\ln f_0'(x_0)\bigg)\\
&=\frac{f_0'(t)}{f_0'(x_0)}\exp(u(t))
}
is continuous, and therefore integrable. Thus $u$ is well-defined and differentiable. 

Now we show that $\UQA\calF =\QA{u}$. Due to \thm{FCs}, we know that $\UQA\calF =\QA{v}$ for some $v \in \Cs(I)$.
Moreover for all $f \in \calF$ we have $G \ge \frac{f''}{f'}$ and hence for all $x,y \in I$ with $x \le y$ we obtain 
\Eq{*}{
\log u'(y)-\log u'(x)=\int_x^y G(s)\:ds\ge  \int_x^y \frac{f''(s)}{f'(s)}\:ds=\log f'(y)-\log f'(x).
}
Thus $\log u'(y)-\log f'(y)\ge\log u'(x)-\log f'(x)$ and due to \rem{C1QA} we get $\QA{u}\ge \QA{f}$ for all $f \in \calF$. Hence $\QA{u}\ge \UQA\calF=\QA{v}$.

If $\QA{v} \not \ge \QA{u}$ then $\frac{v'}{u'}$ is not increasing, that is $\frac{v'}{u'}(q)<\frac{v'}{u'}(p)$ for some $p,q \in I$ with $p<q$. Due to the continuity, we may assume that none of these points is the endpoint of $I$. 
Thus $\log v'(q) - \log u'(q)<\log v'(p) - \log u'(p)$, which implies
\Eq{*}{
\log v'(q) - \log v'(p) < \log u'(q) - \log u'(p)=\int_p^q G(s)\:ds.
}
Therefore, there exists $\varepsilon>0$ such that
\Eq{*}{
\log v'(q) - \log v'(p) < \int_p^q G(s)-\varepsilon\:ds.
}
Now, for every $s \in [p,q]$ there exists $g_s \in \calF$ such that $(\frac{g_s''}{g_s'})(s)>G(s)-\varepsilon$. What is more, there exists an open neighbourhood $V_s \ni s$ such that $(\frac{g_s''}{g_s'})(x)>G(x)-\varepsilon$ for all $x \in V_s$. Then $(V_s)$ is a family of open intervals which covers $[p,q]$. Therefore, there exists a finite subfamily $(V_{s_1},\dots,V_{s_n})$ which also covers $[p,q]$ ($s_1<s_2<\dots<s_n$). We can assume that for each $i \in\{1,\dots,n-1\}$ the intersection
$V_{s_i} \cap V_{s_{i+1}}$ contains some element, say $m_i$. Additionally we set $m_0:=p$ and $m_n:=q$. Then we have $[m_{i-1},m_i]\subset V_{s_i}$ for all $i \in \{1,\dots,n\}$ and hence 
\Eq{*}{
\sum_{i=1}^n \log v'(m_i) - \log v'(m_{i-1}) 
&=\log v'(q) - \log v'(p)
< \int_p^q G(s)-\varepsilon\:ds
=\sum_{i=1}^n \int_{m_{i-1}}^{m_i} G(s)-\varepsilon\:ds\\
&<\sum_{i=1}^n \int_{m_{i-1}}^{m_i} \frac{g_{s_i}''(s)}{g_{s_i}'(s)}\:ds
=\sum_{i=1}^n \log g_{s_i}'(m_i)-\log g_{s_i}'(m_{i-1}).
}
Thus there exists $j \in\{1,\dots,n\}$ such that $\log v'(m_j) - \log v'(m_{j-1})<\log g_{s_j}'(m_j)-\log g_{s_j}'(m_{j-1})$. Equivalently $\log v'(m_j) -\log g_{s_j}'(m_j)< \log v'(m_{j-1})-\log g_{s_j}'(m_{j-1})$. Thus $\UQA{\calF}=\QA{v} \not \ge \QA{g_{s_j}}$, a contradiction.

Finally, we obtain $\QA{v} \ge \QA{u}$, and hence $\QA{u}=\QA{v}=\UQA\calF$. 
\end{proof}

\end{document}